\documentclass[10pt]{article}

\usepackage[hmargin=35mm,vmargin=40mm]{geometry}
\usepackage{amssymb}
\usepackage{amsmath}
\usepackage{algorithm}
\usepackage{amsthm}
\usepackage{graphicx}
\usepackage{subfigure}
\usepackage{natbib}
\usepackage[noend]{algpseudocode} 



%


\begin{document}

\title{Trace Norm Regularized Tensor Classification and Its Online Learning Approaches}

\author{Ziqiang Shi, Tieran Zheng, and Jiqing Han}

\maketitle

\begin{abstract}
In this paper we propose an algorithm to classify tensor data. Our methodology is built on recent studies about matrix classification with the trace norm constrained weight matrix and the tensor trace norm. Similar to matrix classification, the tensor classification is formulated as a convex optimization problem which can be solved by using the off-the-shelf accelerated proximal gradient (APG) method. However, there are no analytic solutions as the matrix case for the updating of the weight tensors via the proximal gradient. To tackle this problem, the Douglas-Rachford splitting technique and the alternating direction method of multipliers (ADM) used in tensor completion are adapted to update the weight tensors. Further more, due to the demand of real applications, we also propose its online learning approaches. Experiments demonstrate the efficiency of the methods.
\end{abstract}

\section{Introduction}
Tensor or multi-way data analysis have many applications in the field of psychometrics, econometrics, image processing, signal precessing, neuroscience, and data mining [1]. Tensors are higher-order equivalent of vectors and matrices. In this paper, we consider the classification of tensors, which is a generalization of the matrices classification problem proposed by Tomioka and Aihara in [2]. The tensor classification model is formulated as:
\begin{equation}\label{eq:TensorLinearRegression}
f(\mathcal{X};\mathcal{W},b)=<\mathcal{W},\mathcal{X}>+b
\end{equation}
where $\mathcal{W},\mathcal{X}\in \mathbb{R}^{I_1\times I_2\times \cdots \times I_N}$ are $N$-way tensors, $\mathcal{X}$ is the input tensor for which we would like to predict its class label $y$; $\mathcal{W}$ is called the \emph{weight tensor} and $b\in \mathbb{R}$ is the \emph{bias}. Thus we need to infer the \emph{weight tensor} and \emph{bias} from the training samples $\{\mathcal{X}_i,y_i\}^{s}_{i=1}$. This formulation makes the work in [2] as a special case that the tensors evolved have an order of $N=2$.

In the work of matrix classification, Tomioka and Aihara use a norm regularized scheme based on trace norm of the weight matrix [2]. Recently this trace norm regularization scheme has been studied in various contexts, namely, multi-task learning [3], matrix completion [4,5],  and robust principle component analysis [6]. In this paper, similarly to matrix classification, a trace norm for tensors may be introduced to control the complexity of the weight tensor and the deviation of the empirical statistics from the predictions together. Recently, Liu et al. [7] proposed a definition for the tensor trance norm:
\begin{equation}\label{eq:TensorTraceNormDefinition}
\|\mathcal{X}\|_*:=\frac{1}{N} \sum\limits_{i=1}^N \|X_{(i)}\|_*
\end{equation}
where $X_{(i)}$ is the mode-$i$ unfolding of $\mathcal{X}$, $\|X_{(i)}\|_*$ is the trace norm of the matrix $X_{(i)}$, i.e. the sum of the singular values of $X_{(i)}$, and if $N=2$, this tensor norm is just the ordinary matrix trace norm. Now the weight tensor and bias learning problem becomes a convex optimization problem
\begin{equation}\label{eq:OptimizationProblemFormulation}
\mathop {\min}\limits_{\mathcal{W},b}F_s (\mathcal{W},b)=f_s(\mathcal{W},b)+\lambda\left\|\mathcal{W}\right\|_*,
\end{equation}
where $f_s(\mathcal{W},b)=\sum\nolimits_{i=1}^s {\ell(y_i, <\mathcal{W},\mathcal{X}_i>+b)}$ is the empirical cost function induced by some convex smooth loss function $\ell(\cdot,\cdot)$, and $\lambda$ is the regularization parameter. The subscript of $f_s(W,b)$ indicates the number of training samples or time of training procedure which is apparent from context.

For such convex optimization problem, Toh and Yun [8], Ji and Ye [9], and Liu et al. [10] independently proposed similar algorithms in the context of matrix related problems via using accelerated proximal gradient (APG) based methods. In this paper, we adapted the APG based algorithm to this tensor convex optimization problem. Unfortunately, unlike the Theorem 3.1 in [9] for matrix case, there is no closed analytic solution of the weight updating rules in the APG algorithm for the tensor case due to the dependency among multiple constraints. In order to solve the weight updating problem, the Douglas-Rachford splitting technique and the alternating direction method of multipliers [15,16], which have been successfully used in tensor completion tasks [7,11], are employed.

Furthermore, in order to cope with the situations that huge size training set for the data cannot be loaded into the memory simultaneously or the training data appear in sequence (for example video processing), we propose the online implementations of the above algorithms.

\section{Notations}
\label{sec:Notations}

We adopt the nomenclature used by Kolda and Bader on tensor decompositions and applications [1]. The \emph{order} $N$ of a tensor is the number of dimensions, also known as ways or modes. Matrices (tensor of order two) are denoted by upper case letters, e.g. $X$, and lower case letters for the elements, e.g. $x_{ij}$. Higher-order tensors (order three or higher) are denoted by Euler script letters, e.g. $\mathcal{X}$, and element $(i_1,i_2,\cdots,i_N)$ of a $N$-order tensor $\mathcal{X}$ is denoted by $x_{i_1i_2\cdots i_N}$. \emph{Fibers} are the higher-order analogue of matrix rows and columns. A fiber is defined by fixing every index but one. The mode-$n$ fibers are all vectors $x_{i_1\cdots i_{n-1}:i_{n+1}\cdots i_N}$ that obtained by fixing the values of $\{i_1, i_2,\cdots, i_N\} \setminus i_n$. The mode-$n$ \emph{unfolding}, also knows as \emph{matricization}, of a tensor $\mathcal{X}\in \mathbb{R}^{I_1\times I_2\times \cdots \times I_N}$ is denoted by $X_{(n)}$ and arranges the model-$n$ fibers to be the columns of the resulting matrix. The unfolding operator is denoted as $\text{unfold}(\cdot)$. The opposite operation is $\text{refold}(\cdot)$, denotes the refolding of the matrix into a tensor.  The tensor element $(i_1,i_2,\cdots,i_N)$ is mapped to the matrix element $(i_n,j)$, where
\begin{equation*}
j=1+\displaystyle{\sum\limits_{\begin{subarray}{|}k=1\\ k\neq n\end{subarray}}^N (i_k-1)J_k}\quad  \textrm{ with  }\quad   J_k=\prod\limits_{\begin{subarray}{|}m=1 \\ m\neq n\end{subarray} } ^{k-1} I_m
\end{equation*}
Therefore, $X_{(n)}\in \mathbb{R}^{I_n\times I_1\cdots I_{n-1}I_{n+1}\cdots I_N}$. The $n$-\emph{rank} of a $N$-dimensional tensor $\mathcal{X}$, denoted as $\text{rank}_n(\mathcal{X})$ is the column rank of $X_{(n)}$, i.e. the dimension of the vector space spanned by the mode-$n$ fibers. The inner product of two same-size tensors $\mathcal{X},\mathcal{Y}\in \mathbb{R}^{I_1\times I_2\times \cdots \times I_N}$ is defined as
\begin{equation*}
<\mathcal{X},\mathcal{Y}>=\sum\limits_{i_1=1}^{I_1} \sum\limits_{i_2=1}^{I_2}\cdots \sum\limits_{i_N=1}^{I_N}x_{i_1i_2\cdots i_N}y_{i_1i_2\cdots i_N}.
\end{equation*}
The corresponding norm is $\|\mathcal{X}\|_F=\sqrt{<\mathcal{X},\mathcal{X}>}$, which is often called the Frobenius norm.


\section{Accelerated Proximal Gradient Method}
It is known [8] that the gradient step
\begin{equation}
\mathcal{W}_{k}=\mathcal{W}_{k-1}-\frac{1}{t_k}\nabla_\mathcal{W} f_s(\mathcal{W}_{k-1},b)
\end{equation}
for solving the following smooth problem with fixed bias $b$
\begin{equation}\label{eq:OptimizationProblemFormulationWithoutTraceNorm}
\mathop {\min}\limits_{\mathcal{W}}f_s(\mathcal{W},b)
\end{equation}
without trace norm regularization can be formulated equivalently as a proximal regularization of the linearized function $f_s(\mathcal{W},b)$ at $\mathcal{W}_{k-1}$ as
\begin{equation}\label{eq:ProximalRegularization}
\mathcal{W}_{k}=\mbox{arg}\!\mathop {\min}\limits_{\mathcal{W}}P_{t_k}(\mathcal{W},\mathcal{W}_{k-1}),
\end{equation}
where
\begin{equation}\label{eq:LinearizedFunction}
P_{t_k}(\mathcal{W},\mathcal{W}_{k-1})=f_s(\mathcal{W}_{k-1},b)+<\mathcal{W}-\mathcal{W}_{k-1},\nabla_\mathcal{W} f_s(\mathcal{W}_{k-1},b)>+\frac{t_k}{2}\|\mathcal{W}-\mathcal{W}_{k-1}\|_F^2
\end{equation}
and $\nabla_\mathcal{W} f_s(\cdot,b)$ is the gradient of $f_s(\cdot,b)$ with respect to $\mathcal{W}$.

Based on this equivalence relation, Toh and Yun [8], Ji and Ye [9], and Liu et al. [10] proposed to solve the optimization problem in Eq.~(\ref{eq:OptimizationProblemFormulation}) by the following iterative step:
\begin{equation}\label{eq:APGiteration}
\mathcal{W}_{k}=\mbox{arg}\!\mathop {\min}\limits_{\mathcal{W}}Q_{t_k}(\mathcal{W},\mathcal{W}_{k-1})\triangleq P_{t_k}(\mathcal{W},\mathcal{W}_{k-1})+\lambda \|\mathcal{W}\|_*
\end{equation}
or equivalently
\begin{equation}\label{eq:APGiterationSimpleForm}
\mathcal{W}_{k}=\mbox{arg}\!\mathop {\min}\limits_{\mathcal{W}}\{\frac{t_k}{2}\|\mathcal{W}-(\mathcal{W}_{k-1}-\frac{1}{t_k}\nabla_\mathcal{W} f_s(\mathcal{W}_{k-1},b))\|_F^2+\lambda \|\mathcal{W}\|_*\}.
\end{equation}

Unfortunately, when the order of the tensor evolved in the problem is three or higher, there is no closed analytic solution to the above problem due to the tensor norm. This is contrast to the matrix case, where the Eq.~(\ref{eq:APGiterationSimpleForm}) can be solved by singular value decomposition (SVD) and soft ``shrinkage'' like the theorem 3.1 in [9]. However, the Douglas-Rachford splitting technique and the alternating direction method of multipliers can be used to solve Eq.~(\ref{eq:APGiterationSimpleForm}) for higher tensors. These methods will be described in the next section. Now, we assume that the Eq.~(\ref{eq:APGiteration}) or Eq.~(\ref{eq:APGiterationSimpleForm}) can be properly solved.

In general APG methods, the Lipschitz constant for $\nabla_\mathcal{W} f_s(\cdot,b)$ is unknown, so it is need to estimate the appropriate step size $t_k$ to guarantee the convergence rate [8,9,10]. In this work, the standard squared loss function is used in Eq.~(\ref{eq:OptimizationProblemFormulation}). With this loss function, we can explicitly compute the Lipschitz constant in Lemma~\ref{lemma:LipschitzConstant}. Thus the step size estimation can be omitted in our tensor classification problems.

\newtheorem{thm}{Theorem}[section]
\newtheorem{cor}[thm]{Corollary}
\newtheorem{lem}[thm]{Lemma}

\begin{lem}
\label{lemma:LipschitzConstant}
$\nabla_W f_s(\cdot,b)$ is Lipschitz continuous with constant $L=2\prod\nolimits_{m=1}^{N} I_m\sum\limits_{i=1}^s \left\|\mathcal{X}_i\right\|_F^2$, i.e.,
\begin{equation}\label{eq:LipschitzCondition}
\left\|\nabla_\mathcal{W} f_s(\mathcal{U},b)-\nabla_\mathcal{W} f_s(\mathcal{V},b)\right\|_F\leq L\left\|\mathcal{U}-\mathcal{V}\right\|_F, \forall \mathcal{U}, \mathcal{V}\in \mathbb{R}^{I_1\times I_2\times \cdots \times I_N},
\end{equation}
where $\left\|\cdot\right\|_F$ denotes the Frobenius norm.
\end{lem}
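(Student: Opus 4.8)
The plan is to write $\nabla_\mathcal{W} f_s(\cdot,b)$ in closed form for the squared loss and then estimate directly the Frobenius norm of the difference of two gradients. Taking $\ell(y,\hat y)=(y-\hat y)^2$, the empirical cost is $f_s(\mathcal{W},b)=\sum_{i=1}^s(\langle\mathcal{W},\mathcal{X}_i\rangle+b-y_i)^2$, and since $\mathcal{W}\mapsto\langle\mathcal{W},\mathcal{X}_i\rangle$ is linear with gradient $\mathcal{X}_i$, differentiating term by term gives
\[
\nabla_\mathcal{W} f_s(\mathcal{W},b)=2\sum_{i=1}^s(\langle\mathcal{W},\mathcal{X}_i\rangle+b-y_i)\,\mathcal{X}_i .
\]
First I would record this formula and observe that $\nabla_\mathcal{W} f_s$ is affine in $\mathcal{W}$, so that when two gradients are subtracted the fixed bias $b$ and the labels $y_i$ cancel, leaving a map that is linear in its tensor argument.

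Next, for fixed $b$ and arbitrary $\mathcal{U},\mathcal{V}\in\mathbb{R}^{I_1\times\cdots\times I_N}$, I subtract the two gradients and use bilinearity of the inner product to obtain
\[
\nabla_\mathcal{W} f_s(\mathcal{U},b)-\nabla_\mathcal{W} f_s(\mathcal{V},b)=2\sum_{i=1}^s\langle\mathcal{U}-\mathcal{V},\mathcal{X}_i\rangle\,\mathcal{X}_i .
\]
Applying the triangle inequality for $\|\cdot\|_F$ to pull the finite sum outside then yields the estimate $\|\nabla_\mathcal{W} f_s(\mathcal{U},b)-\nabla_\mathcal{W} f_s(\mathcal{V},b)\|_F\le 2\sum_{i=1}^s|\langle\mathcal{U}-\mathcal{V},\mathcal{X}_i\rangle|\,\|\mathcal{X}_i\|_F$, after which everything reduces to bounding the scalar $\langle\mathcal{U}-\mathcal{V},\mathcal{X}_i\rangle$.

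The one step that fixes the precise constant is this scalar bound. The inner product is a sum over the $\prod_{m=1}^N I_m$ entries of the tensors, and each coordinate of a tensor is bounded in absolute value by the tensor's Frobenius norm (because $\|\mathcal{X}\|_F^2$ is a sum of squared entries). Bounding each factor entrywise and counting the number of terms gives $|\langle\mathcal{U}-\mathcal{V},\mathcal{X}_i\rangle|\le\prod_{m=1}^N I_m\,\|\mathcal{U}-\mathcal{V}\|_F\,\|\mathcal{X}_i\|_F$. Substituting this into the previous line and collecting terms produces exactly
\[
\|\nabla_\mathcal{W} f_s(\mathcal{U},b)-\nabla_\mathcal{W} f_s(\mathcal{V},b)\|_F\le 2\prod_{m=1}^N I_m\Big(\sum_{i=1}^s\|\mathcal{X}_i\|_F^2\Big)\|\mathcal{U}-\mathcal{V}\|_F=L\,\|\mathcal{U}-\mathcal{V}\|_F .
\]

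I do not expect a genuine obstacle, since the map is linear and the whole argument is a single chain of inequalities. The only point requiring care is the choice of scalar estimate: the sharp Cauchy--Schwarz bound $|\langle\mathcal{U}-\mathcal{V},\mathcal{X}_i\rangle|\le\|\mathcal{U}-\mathcal{V}\|_F\|\mathcal{X}_i\|_F$ would give the smaller value $2\sum_{i}\|\mathcal{X}_i\|_F^2$, so recovering the stated $L$ is precisely a matter of using the looser entrywise bound that introduces the dimension factor $\prod_{m=1}^N I_m$. I would also remark that any valid upper bound serves as a Lipschitz constant, so this non-tight value is legitimate and, crucially, computable a priori from the data, which is exactly what allows the step-size search in the APG iteration to be omitted.
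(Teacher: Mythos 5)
Your proposal is correct and follows essentially the same route as the paper: compute the closed-form gradient for the squared loss, subtract to get $2\sum_i\langle\mathcal{U}-\mathcal{V},\mathcal{X}_i\rangle\mathcal{X}_i$, apply the triangle inequality, and then bound the scalar inner product by $\prod_{m=1}^N I_m\,\|\mathcal{U}-\mathcal{V}\|_F\|\mathcal{X}_i\|_F$ (the paper routes this last step through the $\ell_1$ norm, $|\langle\mathcal{A},\mathcal{B}\rangle|\le\|\mathcal{A}\|_1\|\mathcal{B}\|_1\le\prod_m I_m\|\mathcal{A}\|_F\|\mathcal{B}\|_F$, but that is the same estimate you obtain entrywise). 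Your closing observation that plain Cauchy--Schwarz would yield the sharper constant $2\sum_i\|\mathcal{X}_i\|_F^2$ is accurate and worth noting, though it does not change the validity of the stated $L$.
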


\begin{proof}
With the standard squared loss, the gradient of $f_s(\mathcal{W},b)$ with respect to $\mathcal{W}$ is
\begin{equation}\label{eq:Gradient}
\nabla_\mathcal{W} f_s(\mathcal{W},b)=-2\sum\limits_{i=1}^s {(y_i-<\mathcal{W},\mathcal{X}_i>-b)\mathcal{X}_i},
\end{equation}
Applying Eq.~(\ref{eq:Gradient}) with $U,\mathcal{V}$ to the right of Eq.~(\ref{eq:LipschitzCondition}),
we obtain
\begin{eqnarray*}
& &\left\|\nabla_\mathcal{W} f_s(\mathcal{U},b)-\nabla_\mathcal{W} f_s(\mathcal{V},b)\right\|_F \\
&=&\left\| -2\sum\nolimits_{i=1}^s {(y_i-<\mathcal{U},\mathcal{X}_i>-b)\mathcal{X}_i} + 2\sum\nolimits_{i=1}^s {(y_i-<\mathcal{V},\mathcal{X}_i>-b)\mathcal{X}_i} \right\|_F \\
&=&2\left\| \sum\nolimits_{i=1}^s {(<\mathcal{U},\mathcal{X}_i>-<\mathcal{V},\mathcal{X}_i>)\mathcal{X}_i}\right\|_F \\
&\le&2\sum\nolimits_{i=1}^s \left| <\mathcal{U}-\mathcal{V},\mathcal{X}_i>\right| \left\| \mathcal{X}_i\right\|_F \\
&\le&2\prod\limits_{m=1}^{N} I_m\sum\nolimits_{i=1}^s \left\| \mathcal{U}-\mathcal{V}\right\|_F \left\| \mathcal{X}_i\right\|_F^2 \\
&=&(2\prod\limits_{m=1}^{N} I_m\sum\nolimits_{i=1}^s  \left\| \mathcal{X}_i\right\|_F^2)\left\| \mathcal{U}-\mathcal{V}\right\|_F,
\end{eqnarray*}
where in the last inequality, the easily verified fact that $<\mathcal{A},\mathcal{B}>\le \left\| \mathcal{A}\right\|_1\left\| \mathcal{B} \right\|_1 \le \prod\nolimits_{m=1}^{N} I_m\left\| \mathcal{A}\right\|_F\left\| \mathcal{B} \right\|_F$ for
$\forall \mathcal{A},\mathcal{B} \in \mathbb{R}^{I_1\times I_2\times \cdots \times I_N}$ is used. Here $\left\| \cdot \right\|_1$ denotes the $\ell_1$ norm which is the sum of the absolute values of the tensor elements.

Thus the lemma is proved, that is to say $\nabla_\mathcal{W} f_s(\cdot,b)$ is Lipschitz continuous with constant $L=2\prod\nolimits_{m=1}^{N} I_m\sum\nolimits_{i=1}^s \left\|\mathcal{X}_i\right\|_F^2$.
\end{proof}

Based on the the work of Nesterov [13,14], Toh and Yun [8], Ji and Ye [9], and Liu et al. [10] showed that introduce a search point sequence $\mathcal{Z}_k=\mathcal{W}_k+\frac{t_{k-1}-1}{t_k}(\mathcal{W}_k-\mathcal{W}_{k-1})$ for a sequence ${t_k}$ satisfying $t_{k+1}^2-t_{k+1}\leq t_{k}^2$ results in a convergence rate of $O(\frac{1}{k_2})$. Based on their results, we adapted the APG algorithm to the tensor classification case and summarized in Algorithm~\ref{algo:APG}. In this algorithm, the step of line 2 is not explicit. In the next section we will introduce some methods to solve this problem.

When the weight tensor is obtained, the bias $b$ can be derived by solving the following problem with fixed weight tensor
\begin{equation}\label{eq:BiasUpdateProblem}
b_k = \mbox{arg}\!\mathop {\min}\limits_{b}\{\sum\limits_{i=1}^s {(y_i-<\mathcal{W}_k,\mathcal{X}_i>-b)^2}+\lambda\left\|\mathcal{W}_k\right\|_*\},
\end{equation}
which results in the bias updating rule
\begin{equation}\label{eq:BiasUpdateRule}
b_k = \frac{1}{s}\sum\limits_{i=1}^s (y_i-<\mathcal{W}_k,\mathcal{X}_i>).
\end{equation}

\begin{algorithm}[t]
\caption{Weight Tensor Learning via APG} \label{algo:APG}
\textbf{Input} $(\mathcal{X}_i,y_i),i=1,\cdots,s.$

\textbf{Initialization} $\mathcal{W}_0=\mathcal{Z}_1\in  \mathbb{R}^{I_1\times I_2\times \cdots \times I_N}, \alpha_1 =1, L=2\prod\nolimits_{m=1}^{N} I_m\sum\limits_{i=1}^s \left\|\mathcal{X}_i\right\|_F^2, \lambda, k=1. $

1: \textbf{while} not converged \textbf{do}

2: $\mathcal{W}_k=\mbox{arg}\!\mathop {\min}\limits_{\mathcal{W}}\{\frac{L}{2}\|\mathcal{W}-(\mathcal{Z}_{k}-\frac{1}{L}\nabla_\mathcal{W} f_s(\mathcal{Z}_{k},b))\|_F^2+\lambda \|\mathcal{W}\|_*\}$.

3: $\alpha_{k+1}=\frac{1+\sqrt{1+4\alpha_k^2}}{2}$.

4: $\mathcal{Z}_{k+1}=\mathcal{W}_k+\frac{\alpha_{k}-1}{\alpha_{k+1}}(\mathcal{W}_k-\mathcal{W}_{k-1})$.

5: $k\leftarrow k+1$.

6: \textbf{end while}

\textbf{Output}: $\mathcal{W}\leftarrow \mathcal{W}_k$.
\end{algorithm}




\section{Minimization via Gandy's Algorithms}
\label{Gandy_Algorithm}
Apparently that the problem of Eq.~(\ref{eq:APGiterationSimpleForm}) or line 2 in Algorithm~\ref{algo:APG} fulfils the recently proposed tensor completion formulation [7,11].
For tensor completion, Gandy proposed two algorithms based on Douglas-Rachford splitting technique and the alternating direction method of multipliers (ADM) respectively. In this work, we adapt these two methods to solve the problem~(\ref{eq:APGiterationSimpleForm}).

\emph{Douglas-Rachford splitting technique based method}: The Douglas-Rachford splitting technique has a long history [15,16]. It addresses the minimization of the sum of two functions $(f+g)(x)$, where $f$ and $g$ are lower semicontinuous convex functions. The Douglas-Rachford splitting technique asserted that $\text{prox}_{\lambda g}(\tilde{x})$ is a minimizer of $(f+g)(x)$, where $\tilde{x}$ is the limit point of the following sequence:
\begin{equation}\label{eq:ConvergenceSequence}
x_{n+1}:=x_n+t_n\{\text{prox}_{\lambda f}[2\text{prox}_{\lambda g}(x_n)-x_n]-\text{prox}_{\lambda g}(x_n)\},
\end{equation}
where $t_n\in [0,2]$ satisfies $\sum \nolimits_{n\geq 0} t_n(2-t_n)=\infty$ and the proximal map $\text{prox}_{\lambda g}(\cdot)$ is defined as [17,18]:
\begin{equation}\label{eq:ProximalMapDefinition}
\text{prox}_{\lambda f}: x \mapsto \mbox{arg}\!\mathop {\min}\limits_{y}\{f(y)+\frac{1}{2\lambda}\|x-y\|^2 \}.
\end{equation}
We first formulate the problem in step 2 of Algorithm~\ref{algo:APG} into the unconstrained minimization of $(f+g)(x)$. Let $\mathfrak{F}:=\mathbb{R}^{I_1\times I_2\times \cdots \times I_N}$, define a Hilbert space $\mathfrak{H}_0:=\underbrace{\mathfrak{F} \times \mathfrak{F} \times \cdots \times  \mathfrak{F}}_{N+1 \text{ terms}}$ with the inner product $<\mathfrak{X},\mathfrak{Y}>_{\mathfrak{H}_0}:=\frac{1}{N+1}\sum\nolimits_{i=0}^N <\mathcal{X}_i,\mathcal{Y}_i>$. Then the problem can be rephrased as:
\begin{equation}\label{eq:DRProblemFormulation}
\mathop {\text{minimize}}\limits_{\mathfrak{W} \in \mathfrak{H}_0}\quad  f(\mathfrak{W})+g(\mathfrak{W}),
\end{equation}
where $\mathfrak{W}=(\mathcal{W}_0,\mathcal{W}_1,\cdots,\mathcal{W}_N)$, $D=\{\mathfrak{W}\in \mathfrak{H}_0|\mathcal{W}_0=\mathcal{W}_1=\cdots=\mathcal{W}_N\}$, and
\begin{eqnarray}\label{eq:DRFGfunction}
f(\mathfrak{W})=\frac{L}{2}\|\mathcal{W}_{0}-\mathcal{P}\|_F^2+\sum\limits_{i=1}^{N}\frac{\lambda}{N}\|W_{i,(i)}\|_*,\\
g(\mathfrak{W})=i_D(\mathfrak{W})=\left\{
\begin{aligned}
\begin{array}{l}
\end{array}
0,\text{ if }\mathfrak{W}\in D \\
+\infty, \text{ otherwise }
\end{aligned}
\right.
\end{eqnarray}
where $\mathcal{P}=\mathcal{Z}_{k-1}-\frac{1}{L}\nabla_\mathcal{W} f_s(\mathcal{Z}_{k-1},b)$. Then in order to apply the stand DR splitting technique, the proximal maps of $f(\mathfrak{W})$ and $g(\mathfrak{W})$ need to be identified.

The proximal map of $f(\mathfrak{W})$ is given by
\begin{eqnarray*}\label{eq:FproximalMap}
\text{prox}_{\gamma f}\mathfrak{W}&=& \mbox{arg}\!\mathop {\min}\limits_{\mathfrak{Y}\in \mathfrak{H}_0}\{\frac{L}{2}\|\mathcal{W}_{0}-\mathcal{P}\|_F^2+\sum\limits_{i=1}^{N}\frac{\lambda}{N}\|W_{i,(i)}\|_* + \frac{1}{2\gamma}\|\mathfrak{Y}-\mathfrak{W}\|^2_{\mathfrak{H}_0}\}\\
&=&\mbox{arg}\!\mathop {\min}\limits_{\mathfrak{Y}\in \mathfrak{H}_0} \{\frac{L}{2}\|\mathcal{W}_{0}-\mathcal{P}\|_F^2+\sum\limits_{i=1}^{N}\frac{\lambda}{N}\|W_{i,(i)}\|_* + \frac{1}{2(N+1)\gamma}\sum\limits_{i=0}^{N}\|\mathcal{Y}_i-\mathcal{W}_i\|_F^2\}\\
&=&(\text{prox}_{(N+1)\gamma (\frac{L}{2}\|\mathcal{W}-\mathcal{P}\|_F^2)}\mathcal{W}_0,\text{prox}_{(N+1)\gamma (\frac{\lambda}{N}\|W_{1,(1)}\|_*) }\mathcal{W}_1,\cdots,\text{prox}_{(N+1)\gamma (\frac{\lambda}{N}\|W_{N,(N)}\|_*) } \mathcal{W}_N)
\end{eqnarray*}
For $\text{prox}_{(N+1)\gamma (\frac{L}{2}\|\mathcal{W}-\mathcal{P}\|_F^2)}\mathcal{W}_0$,
we have
\begin{equation}\label{eq:F0solution}
\mbox{arg}\!\mathop {\min}\limits_{\mathcal{Y}\in \mathfrak{F}}\{\frac{L}{2}\|\mathcal{W}-\mathcal{P}\|_F^2+\frac{1}{2(N+1)\gamma}\|\mathcal{W}-\mathcal{Y}_0\|_F^2\}={(\frac{L}{2}\mathcal{P}+\frac{1}{2(N+1)\gamma}\mathcal{Y}_0)}/{(\frac{L}{2}+\frac{1}{2(N+1)\gamma})}.
\end{equation}
For $\text{prox}_{(N+1)\gamma (\frac{\lambda}{N}\|W_{i,(i)}\|_*) }\mathcal{W}_i, i=1,\cdots,N$, by Theorem 3.1 in [9], we have
\begin{equation}\label{eq:FIsolution}
\mbox{arg}\!\mathop {\min}\limits_{\mathcal{Y}\in \mathfrak{F}}\{\frac{\lambda}{N}\|W_{i,(i)}\|_*+\frac{1}{2(N+1)\gamma}\|\mathcal{W}-\mathcal{Y}_i\|_F^2\}=\text{refold}(U\mathcal{S}_{\frac{\lambda (N+1)\gamma}{N}}[S]V^T),
\end{equation}
where $USV^T$ is the SVD of $Y_{i,(i)}$, the $\text{refold}(\cdot)$ is referred to Section~\ref{sec:Notations}, and the $\mathcal{S}_{\varepsilon}[\cdot]$ is the soft-thresholding operator introduced in [19]:
\begin{equation}
\mathcal{S}_{\varepsilon}[x]\doteq \left\{ \begin{array}{l}
 x-\varepsilon, \textrm{if } x>\varepsilon,   \\
 x+\varepsilon, \textrm{if } x<-\varepsilon,   \\
0, \textrm{otherwise}   \\
 \end{array} \right.
\end{equation}
where $x\in \mathbb{R}$ and $\varepsilon > 0$. For vectors and matrices, this operator is extended by applying element-wise.

The proximal map of the indicator function $g(\mathfrak{W})$ is simply given by
\begin{eqnarray*}\label{eq:GproximalMap}
\text{prox}_{\gamma g}\mathfrak{W}&=& (\widehat{\mathfrak{W}},\cdots,\widehat{\mathfrak{W}}),
\end{eqnarray*}
where $\widehat{\mathfrak{W}}=\frac{1}{N+1}\sum\nolimits_{i=1}^{N}{\mathcal{W}_i}$.

Now apply Eq.~(\ref{eq:ConvergenceSequence}), we obtain the iteration rules for the original problem:
\begin{eqnarray}\label{eq:DRTC}
\mathcal{W}_0^{k+1}=\mathcal{W}_0^{k}+ \mbox{arg}\!\mathop {\min}\limits_{\mathcal{W}} (\frac{L}{2}\|\mathcal{W}-\mathcal{P}\|_F^2+\frac{1}{2(N+1)\gamma}\|\mathcal{W}-(2\widehat{\mathfrak{W}}-\mathcal{W}_0^{k})\|_F^2) -\widehat{\mathfrak{W}},\\
\mathcal{W}_i^{k+1}=\mathcal{W}_i^k+\mbox{arg}\!\mathop {\min}\limits_{\mathcal{W}}(\frac{\lambda}{N}\|W_{(i)}\|_*+\frac{1}{2(N+1)\gamma}\|\mathcal{W}-(2\widehat{\mathfrak{W}}-\mathcal{W}_i^k\|_F^2) -\widehat{\mathfrak{W}}, i=1,\cdots,N. 
\end{eqnarray}
The convergence is guaranteed by Theorem 4.1 in [11]. When it converges, the weight tensor is $\mathfrak{W}$.

\emph{ADM based method}: The ADM based method goes back to last century [20]. The approach consists of iteratively updating the original variables and finally carrying out the update of the dual variables. Each update involves a single variable and is conditioned to the fixed value of the others. In order to use the ADM in tensor completion, Gandy introduced $N$ new tensor-value variables that represents the $N$ different mode-$n$ unfoldings of the original tensor, then form the augmented Lagrangian and update all the variables one at a time. Following Gandy's method, we introduce $N$ new variable $\mathcal{Y}_i\in  \mathbb{R}^{I_1\times I_2\times \cdots \times I_N}$ and rephrase line 2 of the Algorithm~\ref{algo:APG} as
\begin{equation}\label{eq:ADMProblemFormulation}
\begin{array}{l}
\mathop {\text{min}}\limits_{\mathcal{W},\mathcal{Y}_i}\quad  \frac{L}{2}\|\mathcal{W}-\mathcal{P}\|_F^2+\frac{\lambda}{N} \sum\limits_{i=1}^N {\|Y_{i,(i)}\|_*} \\
\text{subject to }\quad  \mathcal{Y}_i=\mathcal{W} \quad \forall i\in \{1,\cdots,N\}.
\end{array}
\end{equation}
Let $f(\mathcal{W})=\frac{L}{2}\|\mathcal{W}-\mathcal{P}\|_F^2$, $g(\mathfrak{Y})=\frac{\lambda}{N} \sum\nolimits_{i=1}^N {\|Y_{i,(i)}\|_*}$, where $\mathfrak{Y}=(\mathcal{Y}_1,\cdots,\mathcal{Y}_N)^T$. Thus the constrain over $\mathfrak{Y}$ and $\mathcal{W}$ is $\mathfrak{Y}=(\mathcal{W},\cdots,\mathcal{W})$. Then the augmented Lagrangian of Eq.~(\ref{eq:ADMProblemFormulation}) becomes
\begin{equation}\label{eq:AugmentedLagrangian}
\mathcal{L}_A(\mathcal{W},\mathfrak{Y},\mathfrak{U})=\frac{L}{2}\|\mathcal{W}-\mathcal{P}\|_F^2+\sum\limits_{i=1}^{N}(\frac{\lambda}{N}\|Y_{i,(i)}\|_*-<\mathcal{U}_i,\mathcal{W}-\mathcal{Y}_i>+\frac{\beta}{2}\|\mathcal{W}-\mathcal{Y}_i\|_F^2)
\end{equation}
where the parameter $\beta$ is any positive number and $\mathfrak{U}=(\mathcal{U}_1,\cdots,\mathcal{U}_N)^T$ is the Lagrange multiplier. By minimization $\mathcal{L}_A(\mathcal{W},\mathfrak{Y},\mathfrak{U})$ with respect to each single variable and other variables fixed, we obtain the updating rules of all the variables $\mathfrak{Y},\mathcal{W},\mathfrak{U}$
\begin{equation}\label{eq:ADMUpdatingRules}
\left\{
\begin{array}{l}
\mathcal{W}^{k+1}=(L\mathcal{P}+\beta \sum\limits_{i=1}^{N}\mathcal{Y}_i+\sum\limits_{i=1}^{N}\mathcal{U}_i)/(L+\beta N), \\
\mathcal{Y}_i^{k+1}=\text{refold}(U\mathcal{S}_{\frac{\lambda}{\beta N}}[S]V^T), i=1,\cdots,N, \\
\mathcal{U}_i^{k+1}=\mathcal{U}_i^k-\beta (\mathcal{W}^{k+1}-\mathcal{Y}_i^{k+1}), i=1,\cdots,N,
\end{array}
\right.
\end{equation}
where $USV^T$ is the SVD of $(W_{(j)}^{k+1}-\frac{1}{\beta}U_{j,(j)}^{k})$.

Until now we have proposed two methods to solve the tensor classification problem. In the next section, we discuss the online implementation of the proposed learning processes.

\section{Online Learning}
\label{sec:OnlineImplementations}
The above proposed methods are iterative \emph{batch} procedures, accessing the whole training set at each iteration in order to minimize a weighted sum of a cost function and the tensor trace norm. This kind of learning procedure cannot deal with huge size training set for the data probably cannot be loaded into memory simultaneously, furthermore it cannot be started until the training data are prepared, hence cannot effectively deal with the training data appear in sequence, such as audio and video processing.

To address these problems, we propose an \emph{online} approach that processes the training samples, one at a time, or in mini-batches to learn the weight tensor and the bias for tensor classification. We transform the above algorithm to the online learning framework. The framework is described in Algorithm~\ref{algo:OnlineLearning} in which we also include the bias updating steps.

Our procedure is summarized in Algorithm~\ref{algo:OnlineLearning}. The $\otimes$ operator in step 6 of the algorithm denotes the Kronecker product which is similar to matrix Kronecker product. Given two tensors $\mathcal{A}\in \mathbb{R}^{I_1\cdots \times I_N}$ and $\mathcal{B}\in \mathbb{R}^{J_1\times\cdots \times J_N}$ with equal order $N$, $\mathcal{A}\otimes \mathcal{B}$ denotes the Kronecker product between $\mathcal{A}$ and $\mathcal{B}$, results as a tensor in $\mathbb{R}^{I_1J_1\times\cdots \times I_NJ_N}$, defined by blocks of sizes $J_1\times\cdots \times J_N$ equal to $a_{i_1\cdots i_N}\mathcal{B}$. $\text{GridTr}(\mathcal{W},\mathcal{B}_t)$ in step 13 denotes an operator with input $\mathcal{W}\in \mathbb{R}^{I_1\cdots \times I_N}$ and $\mathcal{B}_t\in \mathbb{R}^{I_1J_1\times\cdots \times I_NJ_N}$, result in $\mathbb{R}^{I_1\cdots \times I_N}$ with the $(i_1,\cdots,i_N)$th element defined as the inner product between $\mathcal{W}$ and the $(i_1,\cdots,i_N)$th $\mathbb{R}^{I_1\cdots \times I_N}$ block of $\mathcal{B}_t$.

Assuming the training set composed of i.i.d. samples of a distribution $p(\mathcal{X},y)$, its inner loop draws one training sample $(\mathcal{X}_t,y_t)$ at a time. This sample is first used to update the ``past'' information $\mathcal{A}_{t-1}$, $\mathcal{B}_{t-1}$, $c_{t-1}$, $\mathcal{D}_{t-1}$, $L_{t-1}$. Then the Algorithm~\ref{algo:APG} is applied to update the weight matrix with the warm start $\mathcal{W}_{t-1}$ obtained at the previous iteration. Since $F_t(\mathcal{W},b_{t-1})$ is relative close to $F_{t-1}(\mathcal{W},b_{t-1})$ for large values of $t$, so are $\mathcal{W}_t$ and $\mathcal{W}_{t-1}$, under suitable assumptions, which makes it efficient to use $\mathcal{W}_{t-1}$ as warm restart for computing $\mathcal{W}_t$.

For the stopping criteria of the inside iterations, we take the following relative error conditions:
\begin{equation}\label{eq:StoppingCriteria}
\|\mathcal{W}_{k+1,t}-\mathcal{W}_{k,t}\|_F/(\|\mathcal{W}_{k,t}\|_F+1)<\varepsilon_1\text{ and }|b_{k+1,t}-b_{k,t}|/(|b_{k,t}|+1)<\varepsilon_2.
\end{equation}

\begin{algorithm}[t]
\caption{Online learning for tensor classification via APG} \label{algo:OnlineLearning}
\textbf{Initialization} $\mathcal{W}_0=0\in  \mathbb{R}^{I_1\times I_2\times \cdots \times I_N}, b_0\in\mathbb{R}, \lambda. $

1: $\mathcal{A}_0\in \mathbb{R}^{I_1\times I_2\times \cdots \times I_N}\leftarrow 0, \mathcal{B}_0\in\mathbb{R}^{I_1I_1\times I_2I_2\times \cdots \times I_NI_N}\leftarrow 0, c_0\in \mathbb{R}\leftarrow 0, \mathcal{D}_0\in \mathbb{R}^{I_1\times I_2\times \cdots \times I_N}\leftarrow 0, L_0=0\in \mathbb{R}$ (reset the ``past'' information).

2: \textbf{for} $t=1$ \textbf{to} $T$ \textbf{do}

3: Draw training sample $(\mathcal{X}_t,y_t)$ from $p(\mathcal{X},y)$.

4: // Line 5-9 update ``past'' information.

5: $\mathcal{A}_t\leftarrow \mathcal{A}_{t-1}+y_t\mathcal{X}_t$;

6: $\mathcal{B}_t\leftarrow \mathcal{B}_{t-1}+\mathcal{X}_t\otimes \mathcal{X}_t$;

7: $c_t\leftarrow c_{t-1}+y_t$;

8: $\mathcal{D}_t\leftarrow \mathcal{D}_{t-1}+\mathcal{X}_t$;

9: $L_t\leftarrow L_{t-1}+2\prod\nolimits_{m=1}^{N} I_m\left\|\mathcal{X}_t\right\|_F^2$.

10: // Line 11-19 compute $\mathcal{W}_t$ using the APG method, with $\mathcal{W}_{t-1}$ as warm restart.

11: $\mathcal{W}_{0,t}=\mathcal{Z}_{1,t}=\mathcal{W}_{t-1}\in\mathbb{R}^{I_1\times I_2\times \cdots \times I_N},b_{0,t}=b_{t-1},\alpha_1 =1, k=1.$

12: \textbf{while} not converged \textbf{do}

13: $\mathcal{W}_{k,t}=\mbox{arg}\!\mathop {\min}\limits_{\mathcal{W}}\frac{L_t}{2}\|\mathcal{W}-(\mathcal{Z}_{k,t}+\frac{2}{L}(\mathcal{A}_t-\text{GridTr}(\mathcal{Z}_{k,t},\mathcal{B}_t)-b_{k-1,t}\mathcal{D}_t))\|_F^2+\lambda \|\mathcal{W}\|_*$.

14: $\alpha_{k+1}=\frac{1+\sqrt{1+4\alpha_k^2}}{2}$.

15: $\mathcal{Z}_{k+1,t}=\mathcal{W}_{k,t}+\frac{\alpha_{k}-1}{\alpha_{k+1}}(\mathcal{W}_{k,t}-\mathcal{W}_{k-1,t})$.

16: $b_{k,t}=\frac{1}{t}(c_t-<\mathcal{W}_{k,t},\mathcal{D}_t>)$

17: $k\leftarrow k+1$.

18: \textbf{end while}

19: $\mathcal{W}_t\leftarrow \mathcal{W}_{k,t},b_t\leftarrow b_{k,t}.$

20: \textbf{end for}

\textbf{Output}: $\mathcal{W}\leftarrow \mathcal{W}_T, b\leftarrow b_T$.
\end{algorithm}

In some conditions, use the classical heuristic in gradient descent algorithm, we may also improve the convergence speed of our algorithm by drawing $\mu > 1$ training samples at each iteration instead of a single one. Let us denote by $(\mathcal{X}_{t,1},y_{t,1}),...,(\mathcal{X}_{t,\mu},y_{t,\mu})$ the samples drawn at iteration $t$. We can now replace lines 5 and 9 of Algorithm~\ref{algo:OnlineLearning} by
\begin{equation}\label{eq:MiniBatchInfoUpdata}
\begin{array}{l}
\mathcal{A}_t\leftarrow \mathcal{A}_{t-1}+\sum\limits_{i=1}^{\mu}{y_{t,i}\mathcal{X}_{t,i}},\quad
\mathcal{B}_t\leftarrow \mathcal{B}_{t-1}+\sum\limits_{i=1}^{\mu}{\mathcal{X}_{t,i}\otimes \mathcal{X}_{t,i}}, \quad
c_t\leftarrow c_{t-1}+\sum\limits_{i=1}^{\mu}{y_{t,i}}, \\
\mathcal{D}_t\leftarrow \mathcal{D}_{t-1}+\sum\limits_{i=1}^{\mu}{\mathcal{X}_{t,i}},
\text{ and } L_t\leftarrow L_{t-1}+\sum\limits_{i=1}^{\mu}{2\prod\nolimits_{m=1}^{N} I_m\left\|\mathcal{X}_{t,i}\right\|_F^2}.
\end{array}
\end{equation}
But in real applications, this online with mini-batch update method may not improve the convergence speed on the whole since the batch past information computation (Eq.~(\ref{eq:MiniBatchInfoUpdata})) would occupy much of the time. The updating of $\mathcal{B}_t$ needs to do Kronecher product which spend much of the computing resource. If the computation cost of Eq.~(\ref{eq:MiniBatchInfoUpdata}) can be ignored or largely decreased, for example by parallel computing, this mini-batch method would increase the convergence speed by a factor of $\mu$.

\section{Experimental Validation}
In this section, we conduct experiments to demonstrate the characteristics of the proposed methods for tensor classification problem. Six algorithms are compared: the batch learning algorithm with APG using DR methods (APG\_DR); the online learning algorithm with APG using DR (OL\_APG\_DR); the batch learning algorithm with APG using ADM method (APG\_ADM); the online learning algorithm with APG using ADM (OL\_APG\_ADM); OL\_APG\_DR with update Eq.~(\ref{eq:MiniBatchInfoUpdata}) (OL\_APG\_DR\_miniBatch); OL\_APG\_ADM with update Eq.~(\ref{eq:MiniBatchInfoUpdata}) (OL\_APG\_ADM\_miniBatch). All algorithms are run in Matlab on a PC with an Intel 2.53GHz dual-core CPU and 3.25GB memory.

For our experiments, we use randomly generated $2.4\times 10^{5}$ $3$-order $10\times 10\times 10$ tensors, which are composed of varied ranks (note that here the rank is not the $n$-rank mentioned above, here the rank concept related to CANDECOMP/PARAFAC decomposition, refer [1] for exact definition); $2\times 10^{5}$ of these are kept for training, and the rest for testing. The goal is to classify the tensors according to their ranks. Hence we have made the tensor rank identification problem into a novel classification or regression formulation. We generate the rank-$r$ tensor as a sum of $r$ rank one tensors, where each rank one tensor is a outer product of 3 vectors whose elements are drawn i.i.d from the standard uniform distribution on the open interval $(0,1)$. For all the algorithm, the parameters in the stopping criteria~(\ref{eq:StoppingCriteria}) are $\varepsilon_1=10^{-10}$ and $\varepsilon_2=10^{-10}$. The regularization constant $\lambda$ is anchored by the large explicit fixed step size $L$ and the tensors involved, which means that in practice the parameter $\lambda$ should be set adaptably with the step size $L$ in the online process. But due to this variation of $\lambda$, the comparisons between the algorithms would not bring into effect. Hence in this work we use $\lambda = 1$ throughout. Considering a balance between convergence speed and accuracy, we set $\beta=10^7, \gamma=10^{-7}$ in this work.

\begin{figure}
\subfigure[Mean square rank prediction error as function of time.]{
\includegraphics[width=0.50\textwidth]{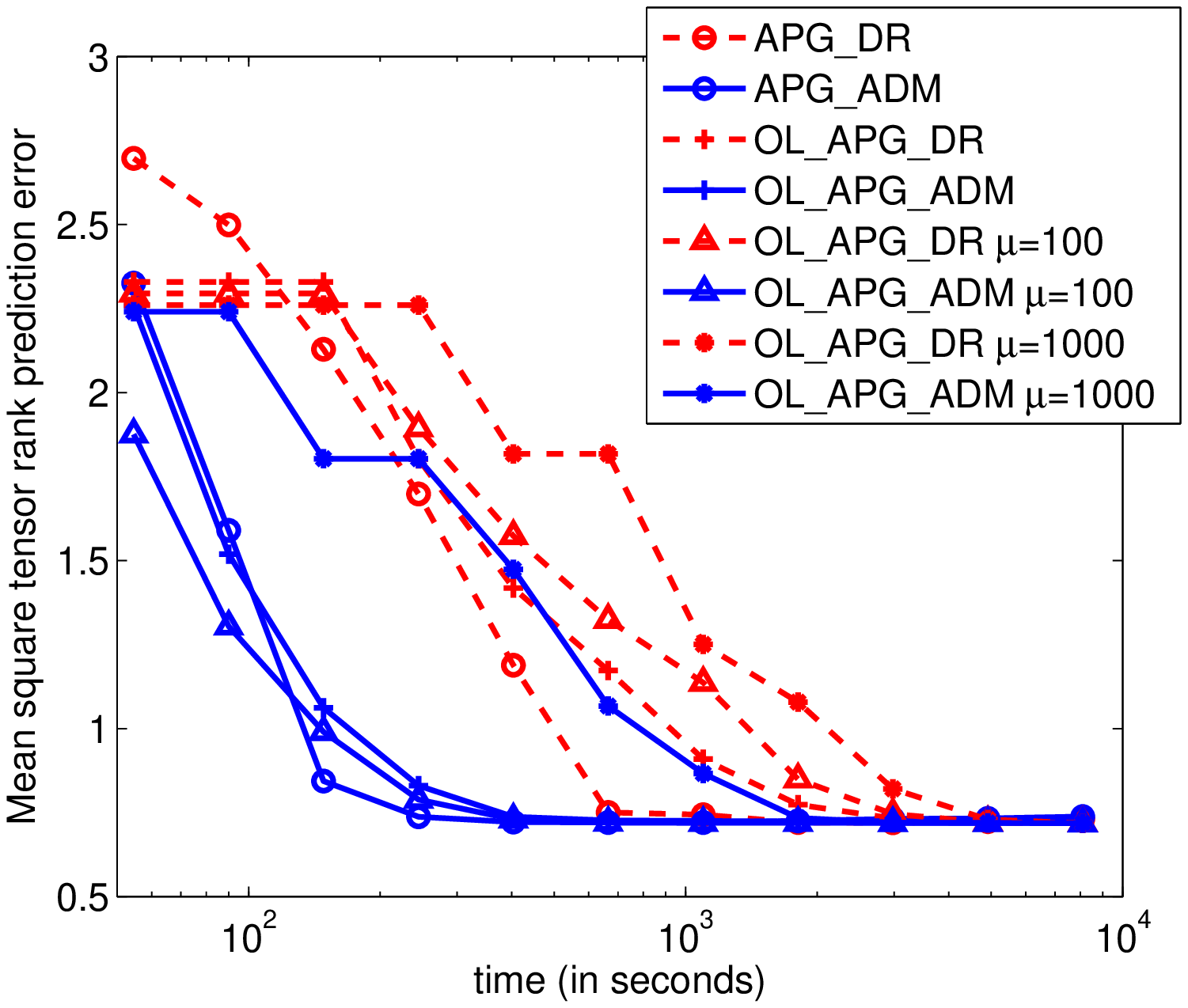}
}
\hspace{0.1in}
\subfigure[Tensor classification accuracy with $\eta=1$ as function of time.]{
\includegraphics[width=0.50\textwidth]{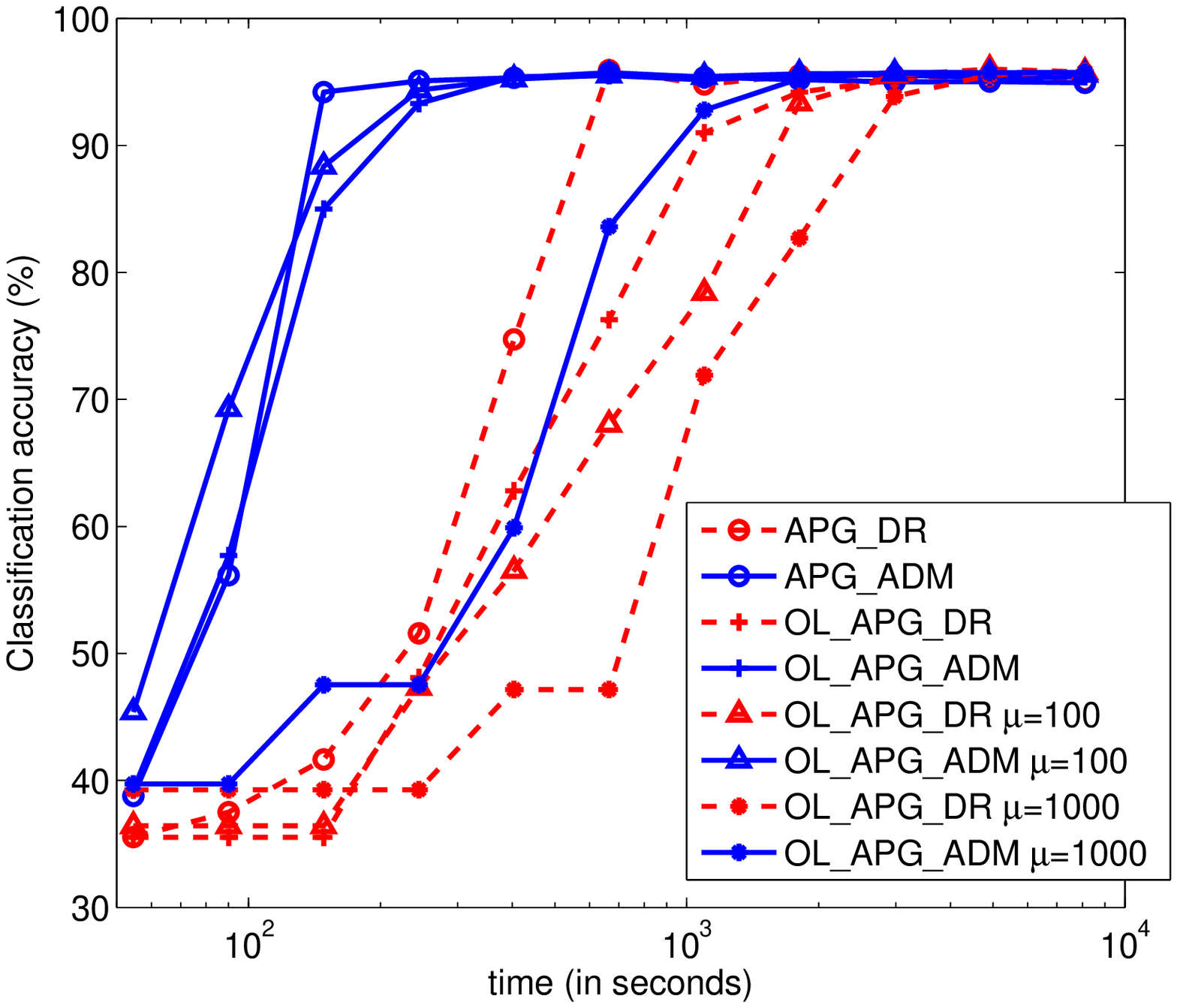}
}

\caption{Comparison between various learning methods and results are reported as functions of learning time on a logarithmic scale. }
\label{fig:PerformancCompar}       
\end{figure}

Figure~\ref{fig:PerformancCompar} compares all the algorithms proposed in this work. The batch algorithm use a training set of $2\times 10^3$ training samples, while the online algorithm draws samples from the entire training set. We use a logarithmic scale for the computation time. Figure~\ref{fig:PerformancCompar}(a) shows the mean square tensor rank prediction errors as functions of time. It can be seen generally that all methods converge. In all these methods, ADM based methods converge faster than DR based methods. The batch learning methods converge faster than corresponding online learning methods with or without mini-batch past information updating. It can also be seen that when the size of the mini-batch used in online method increase, the speed of convergence will decrease, and the reason for this has been explained in the last paragraph of Section~\ref{sec:OnlineImplementations}. After all the methods converge, they result in almost equal performance. Figure~\ref{fig:PerformancCompar}(b) shows the classification rates with tensor rank estimation error tolerances $\eta=1$. Here the rank estimation error tolerance means that if the distance between the estimation rank value and the real rank value is less than $\eta$, then the tensor classification would be right. The convergence of the classification accuracies are corresponding to the convergence of the mean square tensor rank prediction errors. With an error tolerance $\eta=1$, the methods result in a classification rate of 95.9\%.

\section{Conclusions}
\label{sec:Conclusions}
In this paper, we have proposed methods to solve tensor classification problem with a tensor trace norm regularization. We successfully employed APG method to learn parameters, during which DR and ADM are used to update weight tensor. We also give out online learning implementation for all proposed methods. In addition, for standard squared loss function, we derive the explicit form of the Lipschitz constant, which saves the computation burden in searching step size. Our empirical study on tensor classification according to tensor rank demonstrates the merits of the proposed algorithms. This is, to our knowledge, the first work on tensor norm constrained tensor classification. Some future work are worth considering, such as that the alternating between minimization with respect to weight tensor and bias may results in fluctuation of target value, thus optimization algorithm that minimization jointly on weight tensor and bias are required; for multi-classification problems with more classes, some hierarchy methods may be introduced to improve the classification accuracy.


\subsubsection*{References}

\small{
[1] Kolda, T.G. \& Bader, B.W. (2009) Tensor decompositions and applications. {\it SIAM Review}
{\bf 51}(3):455-500.

[2] Tomioka, R. \& Aihara, K. (2007) Classifying matrices with a spectral regularization.
{\it 24th International Conference on Machine Learning}, pp. 895-902.

[3] Argyriou, A. \& Evgeniou, T. \& Pontil, M. (2008) Convex multi-task feature learning. {\it Machine Learning}
{\bf 73}(3):243-272.

[4] Srebro, N. \& Rennie, J. D. M. \& Jaakkola, T. S. (2005) Maximum-margin matrix factorization.
{\it Proceedings of Advances in Neural Information Processing Systems}, pp. 1329-1336

[5] Candes, E. J. \& Recht, B. (2008) Exact matrix completion via convex optimization.
{\it Technical Report, UCLA Computational and Applied Math}.

[6] Wright, J. \& Ganesh, A. \& Rao, S. \& Peng, Y. \& Ma, Y. (2009) Robust principal component analysis:
Exact recovery of corrupted low-rank matrices via convex optimization. {\it Proceedings
of Advances in Neural Information Processing Systems}.

[7] Liu, J., Musialski, P., Wonka, P. \& Ye., J. (2009) Tensor completion for estimating missing values in
visual data. {\it IEEE 12th International Conference on Computer Vision}, pp. 2114-2121.

[8] Toh, K. \& Yun, S. (2010) An accelerated proximal gradient algorithm for nuclear norm regularized least squares
problems. {\it Pacific J. Optim} {\bf 6}:615-640.

[9] Ji, S. \& Ye, J. (2009) An accelerated gradient method for trace norm minimization.
{\it 26th International Conference on Machine Learning}, pp. 457-464.

[10] Liu, Y.J., Sun, D. \& Toh, K.C. (2009) An implementable proximal point algorithmic framework for nuclear norm minimization.
{\it Mathematical Programming}, pp. 1-38.

[11] Gandy, S., Recht, B., \& Yamada, I. (2011) Tensor completion and low-n-rank tensor recovery via convex optimization. {\it Inverse Problems} {\bf 27}(2).

[12] Bertsekas, D.P. (1999) {\it Nonlinear programming}. Athena Scientific Belmont, MA.

[13] Nesterov, Y. (1983) A method of solving a convex programming problem with convergence rate $O(\frac{1}{k_2})$.
{\it Soviet Mathematics Doklady}. {\bf 27}(2):372-376.

[14] Nesterov, Y. (2005) Smooth minimization of non-smooth functions.
{\it Mathematical Programming}. {\bf 103}(1):127-152.

[15] Douglas, J. \& Rachford, H. (1956) On the numerical solution of heat conduction problems in two and
three space variables. {\it Trans. of the American Mathematical Society} {\bf 82}:421-439.

[16] Combettes, P. L. \& Pesquet, J. C. (2007) A Douglas-Rachford splitting approach to nonsmooth
convex variational signal recovery. {\it IEEE J. Sel. Top. Signal Process} {\bf 1}(4):564-574.

[17] Moreau, J.J. (1962) Fonctions convexes duales et points proximaux dans un espace hilbertien.
{\it C.R.Acad.Sci. Paris Ser. A Math} {\bf 244}:2897-2899.

[18] Combettes, P. L. \& Wajs, V.R. (2005) Signal recovery by proximal forward-backward splitting. {\it SIAM
Multiscale Model. Simul.} {\bf 4}:1168-1200.

[19] Lin, Z., Chen, M., Wu, L. \& Ma, Y. (2009) The augmented lagrange multiplier method for exact recovery
of corrupted low-rank matrices. preprint.

[20] Gabay, D. \& Mercier, B. (1976) A dual algorithm for the solution of nonlinear variational problems
via finite-element approximations. {\it Comp. Math. Appl.} {\bf 2}:17-40.

\end{document}